\author{Ievgen Bondarenko}
\title{\textbf{Self-similar groups and the zig-zag and replacement products of graphs}}
\newcommand{\zz}{\mathop{\mbox{\textcircled{$z$}}}}
\newcommand{\rp}{\mathop{\mbox{\textcircled{$r$}}}}
\newcommand{\diam}{{\rm diam}}
\newcommand{\graph}{\mathcal{G}}
\newcommand{\GGS}{\textsf{GGS}}
\newtheorem{theorem}{Theorem}
\newtheorem{corollary}{Corollary}[theorem]
\theoremstyle{definition}
\newtheorem{example}{Example}
\begin{document}
\maketitle

\begin{flushright}
\end{flushright}

\begin{abstract}

Every finitely generated self-similar group naturally produces an infinite sequence of
finite $d$-regular graphs $\Gamma_n$. We construct self-similar groups, whose graphs
$\Gamma_n$ can be represented as an iterated zig-zag product and graph powering:
$\Gamma_{n+1}=\Gamma_n^k\zz\Gamma$ ($k\geq 1$). Also we construct self-similar groups, whose graphs
$\Gamma_n$ can be represented as an iterated replacement product and graph powering:
$\Gamma_{n+1}=\Gamma_n^k\rp\Gamma$ ($k\geq 1$). This gives simple explicit examples of
self-similar groups, whose graphs $\Gamma_n$ form an expanding family, and examples of automaton groups, whose graphs $\Gamma_n$ have linear diameters $\diam(\Gamma_n)=O(n)$ and bounded girth.

\vspace{0.1cm}\textit{2010 Mathematics Subject Classification}: 05C25, 20F65, 20E08

\textit{Keywords}: self-similar group, zig-zag product, replacement product, expanding graph, automaton group
\end{abstract}


\section{Introduction}

A sequence of finite $d$-regular
graphs $(\Gamma_n)_{n\geq 1}$ is an expanding family if there exists $\varepsilon>0$
such that $\lambda(\Gamma_n)<1-\varepsilon$ for all $n\in\mathbb{N}$, where $\lambda(\Gamma)$ is the second largest (in absolute value) eigenvalue of the
normalized adjacency matrix of $\Gamma$. Expanding graphs have many interesting applications in different areas of mathematics and computer science (see \cite{wigderson} and the references therein). That is why many constructions of expanding families were proposed for the last decades, most of which have algebraic nature.

In \cite{RVW00}, Reingold, Vadhan, and Wigderson discovered a simple combinatorial construction of expanding graphs. Their construction is based on the new operation on regular graphs --- the zig-zag product $\zz$.
The estimates on the second eigenvalue of the zig-zag product of graphs proved in \cite{RVW00} lead to the constuction of expanders as an iterated zig-zag product and graph squaring: the sequence $\Gamma_{n+1}=\Gamma_n^2\zz\Gamma$, $\Gamma_1=\Gamma^2$ is an expanding family if $\lambda(\Gamma)$ is small enough.
Later, the zig-zag product showed its effectiveness in constructing graphs with other exceptional properties,
various codes, in computational complexity theory, etc.

The zig-zag product is directly related to the simpler replacement product $\rp$, which replace every vertex of one graph by a copy of another graph. This product was widely used in various contexts. For example, the replacement product of the graph of the $d$-dimensional cube and the cycle on $d$ vertices is the so-called cube-connected cycle, which is used in the network architecture for parallel computations. Gromov \cite{gromov} considered the graphs of $d$-dimensional cubes for different dimensions and estimated the second eigenvalue of their iterated replacement product (iterated cubical graphs). Previte \cite{previte} studied the convergence of iterated replacement product $\Gamma_{n+1}=\Gamma_n\rp\Gamma$, normalized to have diameter one, in the Gromov-Hausdorff metric and their limit spaces. The estimate on the second eigenvalue of the replacement product of graphs proved in \cite{RVW00} leads to the expanding family $\Gamma_{n+1}=\Gamma_n^4\rp\Gamma$ when $\lambda(\Gamma)$ and $\lambda(\Gamma_1)$ are small enough \cite{ZZ_RP_LDPC}.

In this paper we establish a connection between the zig-zag and replacement products of graphs and self-similar groups.
The theory of self-similar groups \cite{self_sim_groups} was developed from several examples of groups (mainly the Grigorchuk group) that enjoy many extreme properties (intermediate
growth, finite width, non-uniformly exponential growth, periodic groups, amenable but
not elementary amenable groups, just-infinite groups, etc.) Self-similar groups are
specific groups of transformations on the space of all finite words over an alphabet
that preserve the length of words. Every self-similar group can be easily defined by a finite system of wreath recursions, while properties of the group remain mysterious.

By fixing a generating set of a self-similar group,
we get a sequence of $d$-regular graphs $\Gamma_n$ associated to the action of
generators on words of length $n$. A natural question arises whether we can produce an expanding family in this way.
However, the graphs $\Gamma_n$ were studied mostly for the opposite case of contracting
self-similar groups. In this case, the graphs $\Gamma_n$ converge in certain sense to a
compact fractal space, which lead to the notion of a limit space of a contracting
self-similar group and further developed into the beautiful theory of iterated
monodromy groups \cite{self_sim_groups}. The diameter of graphs $\Gamma_n$
for contracting groups has exponential growth in terms of $n$ (polynomial in the number of vertices), what makes them opposite to expanding
graphs and the zig-zag product.

An important class of self-similar groups is the class of automaton groups. These
groups are given by finite-state transducers (Mealy automata) with the same input and
output alphabets. Every state of such an automaton $A$ produces a transformation of
words over an alphabet. If all these transformations are invertible, they generate a
self-similar group under composition of functions called the automaton group $G_A$
generated by $A$. For example, the Grigochuk group is generated by a $5$-state
automaton over a $2$-letter alphabet.

The graphs $\Gamma_n$ for an automaton group $G_A$ can be expressed through the standard operation of composition of automata, namely $\Gamma_n=\widehat{A}\circ\ldots\circ\widehat{A}$ ($n$ times), where $\widehat{A}$ is the dual automaton. However, expanding properties of automata composition are unknown.
The complete spectrum of graphs $\Gamma_n$ was computed only for a
few automaton groups \cite{Hecke,SpectralBasilica,gri_sunik:spectrum}, and the
general case remains widely open.
Nevertheless, Glasner and Mozes \cite{square_compl} realized certain groups with property (T) as automaton goups, what implies that the associated graphs $\Gamma_n$ form an expanding family.
The corresponding automata are large and were not described explicitly.
At the same time, there are two specific $3$-state automata over a $2$-letter alphabet, the Aleshin and Bellaterra automata, whose graphs $\Gamma_n$ form asymptotic expanders
\cite[Section~10]{Gri:dynamics}, and the question is raised \cite[Problem
10.1]{Gri:dynamics} whether actually these graphs are expanders. This problem remains
open. Even the asymptotic of diameters of $\Gamma_n$ for these two automata is unknown; the best known upper bound is $O(n^2)$ \cite{Pak}.

In this paper, given $k\geq 1$ and a graph $\Gamma$ with certain restrictions, we construct self-similar groups, whose graphs $\Gamma_n$ can be
represented as iterated zig-zag or replacement products and graph powering:
$\Gamma_{n+1}=\Gamma_n^k\zz\Gamma$ for all $n\geq 1$ or $\Gamma_{n+1}=\Gamma_n^k\rp\Gamma$ for all $n\geq 1$. This gives explicit examples of
self-similar groups whose graphs $\Gamma_n$ form a family of expanders. The established
connection between self-similar groups and the zig-zag product is not surprising --- the
zig-zag product of graphs is closely related to the semidirect product of groups
\cite{semidirect}, while self-similar groups to the wreath product of groups. We also
note that our construction modeling iterated zig-zag product is a self-similar analog of the construction from
\cite{iterative}. In the case $k=1$, the constructed groups are automaton groups. This gives simple explicit examples of automaton groups whose graphs $\Gamma_n$ have linear diameters $O(n)$ (logarithmic in the number of vertices) and bounded girth. Interestingly, some of the automaton groups modeling iterated replacement product belong to the class of $\GGS$ groups \cite{PhD_Barth,branch_groups}. In particular, these groups are not finitely presented and have intermediate growth.

\section{The zig-zag and replacement products of graphs}

All graphs in this paper are regular, undirected, and may have loops and multiple edges.

Let $\graph$ be a $D$-regular graph
on $N$ vertices and let $\Gamma$ be a $d$-regular graph on $D$ vertices. We label the
edges near every vertex of $\graph$ by the vertices of $\Gamma$ in one-to-one fashion;
for $v\in V(\graph)$ and $x\in V(\Gamma)$, let $v[x]$ be the $x$-neighbor of $v$. If an
edge is labeled by $x$ near $v$ and by $y$ near $u$, i.e., $v[x]=u$ and $u[y]=v$, we write $v
\begin{tikzpicture} \draw (0.1,0.1) -- (0.9,0.1); \node [above] at (0.5,0)
{\scriptsize{$x$ \ \ $y$}};\end{tikzpicture} u$. The zig-zag and replacement products depend on the chosen labeling.

\vspace{0.2cm}\textbf{The zig-zag product.} The \textit{zig-zag product} $\graph\zz \Gamma$
is a $d^2$-regular graph on $ND$ vertices $V(\Gamma)\times V(\graph)$. The edges of
$\graph\zz \Gamma$ are formed by ``zig-zag'' paths of length three:
\begin{enumerate}
    \item for every edge $x\mbox{ --- }x'$ in $\Gamma$ (the zig-step),\vspace{-0.2cm}
    \item the edge $v
\begin{tikzpicture} \draw (0.1,0.1) -- (0.9,0.1); \node [above] at (0.55,0)
{\scriptsize{$x'$ \ \ $y'$}};\end{tikzpicture} u$ in $\graph$,
    \item and every edge $y'\mbox{ --- }y$ in $\Gamma$ (the zag-step),
\end{enumerate}
there is an edge between $(x,v)$ and $(y,u)$ in $\graph\zz \Gamma$. (Classically, the
vertices of the zig-zag product are written as pairs $(v,x)$. We switched the order to
show a similarity with action graphs of self-similar groups. As usual, by switching
from right to left, we get a connection between two object studied in different
contexts.)


The next basic properties easily follow from the definition.
The zig-zag product of any two graphs has girth $\leq 4$ and diameter $\diam(\graph\zz
\Gamma)\leq \diam(\graph)+2\diam(\Gamma)$. The zig-zag product of connected graphs is not always connected; one easy sufficient condition is the following: If any two vertices of $\Gamma$ can be connected by a path of even length, then the graph $\graph\zz\Gamma$ is connected for any connected graph $\graph$.

Many applications of the zig-zag product are based on the following spectral property proved in \cite{RVW00}:
\begin{equation}\label{eqn_zig_zag_lambda}
\lambda(\graph\zz \Gamma)\leq \lambda(\graph)+\lambda(\Gamma)+\lambda(\Gamma)^2,
\end{equation}
where $\lambda(\Gamma)$ is the second largest (in absolute value) eigenvalue of the
normalized adjacency matrix of $\Gamma$.

\vspace{0.2cm}\textbf{The replacement product.} The \textit{replacement product} $\graph\rp \Gamma$
is a $(d+1)$-regular graph on $ND$ vertices $V(\Gamma)\times V(\graph)$ with the following edges:
\begin{enumerate}
    \item for every edge $x\mbox{ --- }y$ in $\Gamma$ and $v\in V(\graph)$ there is an edge $(x,v)\mbox{ --- }(y,v)$ in $\graph\rp \Gamma$;
    \item for every edge $v
\begin{tikzpicture} \draw (0.1,0.1) -- (0.9,0.1); \node [above] at (0.55,0)
{\scriptsize{$x$ \ \ $y$}};\end{tikzpicture} u$ in $\graph$ there is an edge $(x,v)\mbox{ --- }(y,u)$ in $\graph\rp \Gamma$.
\end{enumerate}
In other words, we replace each vertex $v$ of $\graph$ with a copy of $\Gamma$ (keeping all the edges of $\Gamma$ in all the copies), and adjoin edges adjacent to $v$ in $\graph$ to the corresponding vertices of $\Gamma$ using the chosen one-to-one correspondence between these edges and vertices of $\Gamma$.

The next properties easily follow from the definition. The replacement product of connected graphs is connected, the diameter satisfies $\diam(\graph\rp\Gamma)\leq \diam(\graph)\cdot\diam(\Gamma)$, and the girth of $\graph\rp\Gamma$ is not greater than the girth of $\Gamma$.

In \cite{RVW00}, the expansion property of the replacement product is estimated as
\begin{equation}\label{eqn_rp_lambda}
\lambda(\graph\rp \Gamma)\leq \left( p+(1-p)(\lambda(\graph)+\lambda(\Gamma)+\lambda(\Gamma)^2)  \right)^{1/3},
\end{equation}
where $p=d^2/(d+1)^3$.

\vspace{0.2cm}\textbf{Iterative construction of expanders.}
Let us describe the construction of
expanding families using the zig-zag product and graph powering presented in
\cite{RVW00}. Take a $d$-regular
graph $\Gamma$ on $d^4$ vertices such that $\lambda(\Gamma)\leq 1/5$ (such graphs exist
by probabilistic arguments). Define the sequence of graphs $(\Gamma_n)_{n\geq 1}$ as
follows:
\begin{equation}\label{eqn_expanders_zig_zag}
\Gamma_1=\Gamma^2, \qquad \Gamma_{n+1}=\Gamma_n^2\zz \Gamma, n\geq 1.
\end{equation}
(The \textit{$k$-th power} $\Gamma^k$ of a graph $\Gamma$ is the graph on the vertices of $\Gamma$,
where each edge corresponds to a path of length $k$ in $\Gamma$. Note that $\lambda(\Gamma^k)=\lambda(\Gamma)^k$.)
Then the estimate (\ref{eqn_zig_zag_lambda}) implies that the graphs $\Gamma_n$ are
$d^2$-regular graphs with $\lambda(\Gamma_n)\leq 2/5$.

Analogous construction works with the replacement product as well \cite{ZZ_RP_LDPC}.
Take a $(d+1)$-regular graph $\Gamma_1$ and a $d$-regular
graph $\Gamma$ on $(d+1)^4$ vertices such that $\lambda(\Gamma_1)\leq 1/5$, $\lambda(\Gamma)\leq 1/5$. Define the sequence of graphs $(\Gamma_n)_{n\geq 1}$ as
follows:
\begin{equation}\label{eqn_expanders_rp}
\Gamma_{n+1}=\Gamma_n^4\rp \Gamma, n\geq 1.
\end{equation}
The estimate (\ref{eqn_rp_lambda}) implies that the graphs $\Gamma_n$ are
$(d+1)$-regular graphs with $\lambda(\Gamma_n)\leq 1/10$.

\section{Self-similar groups and their action graphs}

Every finitely generated self-similar group
can be given by a finite system (wreath recursion)
\begin{equation}\label{eqn_wreath_recursion}
\left\{%
\begin{array}{ll}
s_1&=\pi_1(w_{11},w_{12},\ldots,w_{1d}) \\
s_2&=\pi_2(w_{21},w_{22},\ldots,w_{2d}) \\
\vdots & \qquad \vdots \\
s_k&=\pi_k(w_{k1},w_{k2},\ldots,w_{kd})
\end{array}%
\right.,
\end{equation}
where $\pi_i$ is a permutation on $X=\{1,2,\ldots,d\}$ and $w_{ij}$ is a word over
$S\cup S^{-1}$, $S=\{s_1,s_2,\ldots,s_k\}$. The system defines the action of $S$ on the
set $X^{*}$ of all finite words over $X$ (we use left actions). Each $s_i$ acts on $X$
by the permutation $\pi_i$, and the action on words over $X$ is defined by the
recursive rule
\[
s_i(xv)=\pi_i(x)w_{ix}(v), \quad x\in X, v\in X^{*},
\]
where $w_{ix}$ acts by composition. These transformations are invertible, and the group
generated by them under composition is called the \textit{self-similar group $G=\langle
S\rangle$ associated to the system} (\ref{eqn_wreath_recursion}).

When all words $w_{ix}$ in (\ref{eqn_wreath_recursion}) are letters, i.e.,
$w_{ix}=s_{ix}\in S$, the system (\ref{eqn_wreath_recursion}) can be represented by
a finite-state automaton-transducer $A$ over the alphabet $X$ with states $S$. The automaton $A$ is represented by a finite directed graph with vertices $S$ and arrows
$s_i\rightarrow s_{ix}$ labeled by $x|\pi_i(x)$ for all $x\in X$ and $s_i\in S$.
The action of $s_i$ on $X^{*}$ can be described using the automaton $A$ as follows. Given a word
$v=x_1x_2\ldots x_n\in X^{*}$, there exists a unique directed path in the automaton
$A$ starting at the state $s_i$ and labeled by $x_1|y_1$, $x_2|y_2$,\ldots, $x_n|y_n$ for
some $y_i\in X$. Then the word $y_1y_2\ldots y_n$ is the image of
$x_1x_2\ldots x_n$ under $s_i$.
In this case, the group $G=\langle S\rangle$ is called the \textit{automaton group} given by the automaton~$A$.

Self-similar groups
preserve the length of words under the action on $X^{*}$, and we can restrict the
action to $X^n$, words of length $n$, for each $n\in\mathbb{N}$. By choosing a finite
symmetric generating set $S$ of a self-similar group $G$, we get a sequence of
$|S|$-regular graphs $(\Gamma_n)_{n\geq 1}$ of the action on $X^n$, $n\geq 1$. The vertex set of
$\Gamma_n$ is $X^n$, and for every $s\in S$ and $v\in X^n$ there is an edge between the
vertices $v$ and $s(v)$. The graph $\Gamma_n$ is a Schreier coset graph of $G$ if the
group acts transitively on $X^n$.

If the generating set $S$ is given by the system (\ref{eqn_wreath_recursion}), the
graphs $\Gamma_n=\Gamma_n(S)$ can be constructed iteratively, very similar to
(\ref{eqn_expanders_zig_zag}). Every $s_i\in S$ produces an edge $xv\mbox{ ---
}\pi_i(x)w_{ix}(v)$ in $\Gamma_n$, which can be interpreted as a zig-step $x\mbox{
--- }\pi_i(x)$ in the graph $\Gamma_1$, and a walk $v\mbox{ --- }w_{ix}(v)$ in the graph
$\Gamma_{n-1}$. In contrast to the zig-zag product, we are missing the
zag-step (and the walk $w_{ix}$ is not agreed with $\pi_i(x)$), but we will see in the next section that one can make the edge $x\mbox{ --- }\pi_i(x)$ to be already the combination of the zig and zag steps.

\section{Modeling iterated zig-zag and replacement products by automaton groups}

In this section we construct automaton groups whose action graphs satisfy $\Gamma_{n+1}=\Gamma_n\zz \Gamma$  for all $n\geq 1$ or $\Gamma_{n+1}=\Gamma_n\rp \Gamma$ for all $n\geq 1$, where $\Gamma$ is a fixed graph.

\vspace{0.2cm}\textbf{Modeling iterated zig-zag product.} Let $X=\{1,2,\ldots,d\}$ and $P$ be a symmetric set of permutations on $X$ such that $d=|P|^2$. We introduce formal symbols $s_{(\pi,\tau)}$ for $\pi,\tau\in P$ and define wreath recursion~(\ref{eqn_wreath_recursion}) for the set $S_P=\{
s_{(\pi,\tau)} : \pi,\tau\in P\}$, $|S_P|=d$ as follows. Choose an order on $S_P$: let
$S_P=\{s_1,\ldots,s_{d}\}$. Let $\gamma$ be the permutation on $X$ given by the rule: if
$s_x=s_{(\pi,\tau)}$ then $s_{\gamma(x)}=s_{(\tau^{-1},\pi^{-1})}$. Notice that
$\gamma=\gamma^{-1}$. Define wreath recursion by
\[
s_{(\pi,\tau)}=\tau\gamma
(s_1,s_2,\ldots,s_{d})\pi=\tau\gamma\pi(s_{\pi(1)},s_{\pi(2)},\ldots,s_{\pi(d)}), \
\pi,\tau\in P,
\]
(here $\pi$ and $\tau$ will play a role of the zig and zag steps respectively). Let $G_{P}$ be the
self-similar group defined by this recursion. It is important to note that $S_P$
defines a symmetric generating set of $G_{P}$, where
$s^{-1}_{(\pi,\tau)}=s_{(\tau^{-1},\pi^{-1})}$ (in other notations,
$s^{-1}_x=s_{\gamma(x)}$). This follows inductively from the recursions
\begin{align*}
s^{-1}_{(\pi,\tau)}(xv)&=\pi^{-1}\gamma^{-1}\tau^{-1}(x)\, s^{-1}_{\gamma^{-1}\tau^{-1}(x)}(v),\\
s_{(\tau^{-1},\pi^{-1})}(xv)&=\pi^{-1}\gamma\tau^{-1}(x)\, s_{\tau^{-1}(x)}(v),
\end{align*}
$x\in X$, $v\in X^{*}$ (use $\gamma=\gamma^{-1}$).

\begin{theorem}
The action graphs $\Gamma_n$ of the group $G_P=\langle S_P\rangle$ satisfy
$\Gamma_{n+1}=\Gamma_n\zz \Gamma$, $n\geq 1$, where $\Gamma$ is the graph of the action
of $P$ on $X$.
\end{theorem}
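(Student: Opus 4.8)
The plan is to exhibit the natural bijection $X\times X^n\to X^{n+1}$, $(x,v)\mapsto xv$, and to check that it carries the edges of $\Gamma_n\zz\Gamma$ onto the edges of $\Gamma_{n+1}$ label-for-label. Since both graphs are $d$-regular on $d^{n+1}$ vertices ($\graph=\Gamma_n$ is $d$-regular on $d^n$ vertices, $\Gamma$ has $|X|=d$ vertices and is $|P|$-regular, and $|P|^2=d$), no counting argument beyond a labeled edge-bijection is needed; the statement is a direct structural identity for each $n$ rather than a genuine induction. First I would fix the edge-labeling of the big graph $\graph=\Gamma_n$ demanded by the product: for $v\in X^n$ and $x\in X$, declare the $x$-neighbour to be $v[x]=s_x(v)$. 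The key consistency point is that this labeling is involutive in the sense required by the definition: since $s_x^{-1}=s_{\gamma(x)}$, the edge joining $v$ to $u=s_x(v)$ carries the label $x$ at $v$ and the label $\gamma(x)$ at $u$. This is precisely where the involution $\gamma$ built into the recursion does its work.

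Next I would enumerate the edges of $\Gamma_n\zz\Gamma$ leaving $(x,v)$. Because $\Gamma$ is the action graph of $P$ on $X$, the edge at a vertex $z$ selected by $\pi\in P$ leads to $\pi(z)$; thus a zig-step from $x$ chosen by $\pi\in P$ produces $x'=\pi(x)$. The middle step takes the $x'$-neighbour of $v$ in $\graph$, namely $u=s_{x'}(v)=s_{\pi(x)}(v)$, and by the labeling above the label read off at $u$ is $y'=\gamma(x')=\gamma\pi(x)$. Finally a zag-step from $y'$ chosen by $\tau\in P$ produces $y=\tau(y')=\tau\gamma\pi(x)$. Hence the product edge indexed by the pair $(\pi,\tau)\in P\times P$ joins $(x,v)$ to $(y,u)=\big(\tau\gamma\pi(x),\,s_{\pi(x)}(v)\big)$, and since $\Gamma$ is $|P|$-regular these $|P|^2=d$ pairs exhaust the edges at $(x,v)$.

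On the group side I would compute the action of the generators from the recursion in its rewritten form $s_{(\pi,\tau)}=\tau\gamma\pi(s_{\pi(1)},\ldots,s_{\pi(d)})$, which gives $s_{(\pi,\tau)}(xv)=\tau\gamma\pi(x)\,s_{\pi(x)}(v)$. Comparing, the edge of $\Gamma_{n+1}$ at $xv$ labeled by the generator $s_{(\pi,\tau)}$ goes to the word with first letter $\tau\gamma\pi(x)$ and tail $s_{\pi(x)}(v)$ — exactly the product neighbour $(y,u)$ found above. Thus $(\pi,\tau)\mapsto s_{(\pi,\tau)}$ is a label-preserving bijection between the edges at $(x,v)$ in $\Gamma_n\zz\Gamma$ and the edges at $xv$ in $\Gamma_{n+1}$, which is the desired identification.

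The one real obstacle is the bookkeeping of the three labelings along a zig-zag path, specifically the appearance of $y'=\gamma(x')$ at the middle vertex $u$. The recursion is designed so that the root permutation $\tau\gamma\pi$ factors as zig ($\pi\colon x\mapsto x'$), the reversal $\gamma$ that reads the label of the traversed $\graph$-edge at its far end ($x'\mapsto y'$), and zag ($\tau\colon y'\mapsto y$), while the tail generator $s_{\pi(x)}=s_{x'}$ realizes the middle walk $v\mapsto v[x']=u$ in $\Gamma_n$. Verifying that these match cleanly uses the symmetric structure $s_x^{-1}=s_{\gamma(x)}$ together with $\gamma=\gamma^{-1}$; once the labeling convention is pinned down, the remaining step is the routine substitution above.
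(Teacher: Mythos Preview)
Your proposal is correct and follows essentially the same route as the paper: define the edge-labeling of $\Gamma_n$ by $v[x]:=s_x(v)$, use $s_x^{-1}=s_{\gamma(x)}$ to see that the far-end label is $\gamma(x)$, and then match the zig-zag path $(x,v)\to(\tau\gamma\pi(x),s_{\pi(x)}(v))$ with the generator action $s_{(\pi,\tau)}(xv)=\tau\gamma\pi(x)\,s_{\pi(x)}(v)$. You spell out the degree bookkeeping and the involutive labeling more explicitly than the paper does, but the argument is the same.
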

\begin{proof}
The graph $\Gamma$ is a $|P|$-regular graph on $d$ vertices, while $\Gamma_n$ are $d$-regular graphs. In order to define the zig-zag product $\Gamma_n\zz\Gamma$, we should label the edges of $\Gamma_n$ by the vertices of $\Gamma$.
For $x\in X$, define the
$x$-neighbor of a vertex $v\in X^n$ as $v[x]:=s_x(v)$. In this way we get the labeling
of edges $v
\begin{tikzpicture} \draw (0.1,0.1) -- (1.1,0.1); \node [above] at (0.6,0)
{\scriptsize{$x$ \ $\gamma(x)$}};\end{tikzpicture} s_x(v)$. Now we can consider the
zig-zag product $\Gamma_n\zz\Gamma$. The vertex set of $\Gamma_n\zz\Gamma$ can be
naturally identified with the vertex set $X^{n+1}$ of $\Gamma_{n+1}$ via $(x,v)\leftrightarrow xv$. For every zig-zag path
\begin{center}
$x\overset{\pi}{\mbox{ --- }} x'=\pi(x)$ \mbox{ in $\Gamma$}, \quad $v
\begin{tikzpicture} \draw (0.1,0.1) -- (0.9,0.1); \node [above] at (0.5,0)
{\scriptsize{$x'$ \ $y'$}};\end{tikzpicture} v[x'] \mbox{ in $\Gamma_n$}$, \quad
$y'\overset{\tau}{\mbox{ --- }} y=\tau(y') \mbox{ in $\Gamma$}$
\end{center}
there is an edge $xv-yv[x']$ in $\Gamma_n\zz\Gamma$. Here
$y'=\gamma(x')=\gamma(\pi(x))$ and $v[x']=s_{x'}(v)=s_{\pi(x)}(v)$. Therefore this edge
is precisely the edge of $\Gamma_{n+1}$ given by $s_{(\pi,\tau)}$:
\begin{align*}
xv \mbox{ --- } \tau(\gamma(\pi(x))) s_{\pi(x)}(v).
\end{align*}
\end{proof}


The next statement immediately follows from the properties of the zig-zag product.

\begin{corollary}
The action graphs $\Gamma_n$ of the group $G_P=\langle S_P\rangle$ have bounded girth and linear diameters $\diam(\Gamma_n)=O(n)$.
If $\Gamma_1$ is connected ($P\gamma P$ acts
transitively on $X$) and there is a path of even length between any two vertices of $\Gamma$, then all graphs $\Gamma_n$ are connected (the group $G_P$ acts transitively on $X^n$).
\end{corollary}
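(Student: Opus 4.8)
The plan is to obtain all three assertions directly from the recursion $\Gamma_{n+1}=\Gamma_n\zz\Gamma$ of Theorem~1, by substituting $\graph=\Gamma_n$ into the three elementary properties of the zig-zag product recorded in Section~2 and inducting on $n$. Since $\Gamma$ is a fixed graph, its diameter and girth enter only as constants, so the induction is essentially bookkeeping.

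For the girth, I would observe that every $\Gamma_{n+1}$ with $n\geq 1$ is a zig-zag product and therefore has girth $\leq 4$; hence the girth of $\Gamma_n$ is bounded by $4$ for all $n\geq 2$, giving a uniform bound over the whole family. For the diameter, I would apply the bound $\diam(\graph\zz\Gamma)\leq\diam(\graph)+2\diam(\Gamma)$ with $\graph=\Gamma_n$ to get the recursion $\diam(\Gamma_{n+1})\leq\diam(\Gamma_n)+2\diam(\Gamma)$, and then unfold it to $\diam(\Gamma_n)\leq\diam(\Gamma_1)+2(n-1)\diam(\Gamma)$, which is $O(n)$ because $\diam(\Gamma)$ is a constant.

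For connectivity I would first justify the two parenthetical reformulations. Reading off the root permutation of $s_{(\pi,\tau)}$ as $\tau\gamma\pi$ shows that $\Gamma_1$ is precisely the action graph of the set $P\gamma P$ on $X$, so $\Gamma_1$ is connected exactly when $P\gamma P$ acts transitively on $X$; likewise, since $\Gamma_n$ is the action graph of $S_P$ on $X^n$, its connectivity is equivalent to transitivity of $G_P$ on $X^n$. Granting the base case from the hypothesis, the inductive step is then immediate: if $\Gamma_n$ is connected and any two vertices of $\Gamma$ are joined by a path of even length, then the sufficient condition from Section~2 makes $\Gamma_{n+1}=\Gamma_n\zz\Gamma$ connected.

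There is no substantive obstacle here, as the statement is a formal consequence of the cited zig-zag properties fed into the recursion of Theorem~1. The only place demanding a little care is the verification of the parenthetical identifications, that is, confirming that the top-level action is governed by $\tau\gamma\pi$ and hence by $P\gamma P$, and translating graph-connectivity into transitivity of the action on $X^n$; once these are in place, the three inductions are routine.
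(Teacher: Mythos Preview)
Your proposal is correct and matches the paper's approach: the paper states that the corollary ``immediately follows from the properties of the zig-zag product'' and gives no further argument, and what you have written is precisely the routine unpacking of that remark via Theorem~1 and the three elementary facts about $\zz$ recorded in Section~2. Your additional verification of the parenthetical identifications (that the root action is by $\tau\gamma\pi$, hence $P\gamma P$) is a helpful detail the paper leaves implicit.
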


The wreath recursion for the group $G_P$ defines a finite automaton over $X$ with $d$ states. Therefore every $G_P$ is an automaton group.

\begin{example}
Let $d=4$, $X=\{1,2,3,4\}$ and $P=\{(1\,2), (1\,4)(2\,3)\}$. Then $\gamma=(2\,3)$ and the group $G_P$ is generated by $s_1,s_2,s_3,s_4$ given by the wreath recursion:
\begin{align*}
s_1&=(1\,2)(2\,3)(s_1,s_2,s_3,s_4)(1\,2)=(1\,3)(s_2,s_1,s_3,s_4)\\
s_2&=(1\,4)(2\,3)(2\,3)(s_1,s_2,s_3,s_4)(1\,2)=(1\,2\,4)(s_2,s_1,s_3,s_4)\\
s_3&=(1\,2)(2\,3)(s_1,s_2,s_3,s_4)(1\,4)(2\,3)=(1\,4\,2)(s_4,s_3,s_2,s_1)\\
s_4&=(1\,4)(2\,3)(2\,3)(s_1,s_2,s_3,s_4)(1\,4)(2\,3)=(2\,3)(s_4,s_3,s_2,s_1)
\end{align*}
The generating automaton is shown on the left-hand side of Figure~\ref{fig_automata}.
\end{example}

\begin{figure}
\begin{minipage}[b]{0.49\textwidth }
\begin{center}
{\small
\begin{tikzpicture}[shorten >=1pt,node distance=3.5cm,on grid,auto,/tikz/initial text=,semithick]
   \begin{scope}  
   (0,0) \node[state] at (2:400) (s1)   {$s_1$};
   \end{scope}
   -- (-1,0) \node[state] (s4) [below=of s1] {$s_4$};
   (1,2) \node[state] (s2) [right=of s1] {$s_2$};
   (1,2) \node[state] (s3) [below=of s2] {$s_3$};
    \path[->]
    (s1) edge [loop left] node  {$2|2$} (s1)
    (s1) edge [bend left] node {$1|3$} (s2)
    (s1) edge  node {$4|4$} (s4)
    (s2) edge  node  [above]{$2|4$} (s1)
    (s2) edge [loop right] node {$1|2$} (s2)
    (s2) edge [bend left] node  {$3|3$} (s3)
    (s3) edge  node {$3|3$} (s2)
    (s3) edge [loop right] node  {$2|1$} (s3)
    (s3) edge [bend left] node [below]{$1|4$} (s4)
    (s4) edge [bend left] node {$4|4$} (s1)
    (s4) edge  node  [below]{$2|3$} (s3)
    (s4) edge [loop left] node {$1|1$} (s4);
    \path[->,min distance=1cm] (s1)edge[in=120,out=330,above] node [pos=0.25]{$3|1$} (s3);
    \path[->,min distance=1cm] (s3)edge[in=300,out=150,above] node [pos=0.25,below]{$4|2$} (s1);
    \path[->,min distance=1cm] (s2)edge[in=60,out=210,above] node [pos=0.25,above]{$4|1$} (s4);
    \path[->,min distance=1cm] (s4)edge[in=240,out=30,above] node [pos=0.25,below]{$3|2$} (s2);
\end{tikzpicture}
}\vspace{0.4cm}
\end{center}
\end{minipage}
\begin{minipage}[b]{0.49\textwidth }
\begin{center}
\small{
\begin{tikzpicture}[shorten >=1pt,node distance=3.5cm,on grid,auto,/tikz/initial text=,semithick]
   \begin{scope}  
   (0,0) \node[state] at (2:400) (s1)   {$s$};
   \end{scope}
   -- (-1,0) \node[state] (s4) [below=of s1] {$\sigma$};
   (1,2) \node[state] (s2) [right=of s1] {$\sigma^{-1}$};
   (1,2) \node[state] (e) [below=of s2] {$e$};
    \path[->]
    (s1) edge [loop left] node  {$3|3$} (s1)
    (s1) edge  node {$2|1$} (s2)
    (s1) edge  node {$1|2$} (s4)
    (s2) edge [bend left] node  {$2|1$} (e)
    (s2) edge  node {$3|2$} (e)
    (s2) edge [bend right] node  {$1|3$} (e)
    (e) edge [loop right] node {$1|1$} (e)
    (e) edge [in=-60, out=-30, loop] node  {$2|2$} (e)
    (e) edge [loop below] node {$3|3$} (e)
    (s4) edge [bend left] node {$1|2$} (e)
    (s4) edge  node  {$2|3$} (e)
    (s4) edge [bend right] node {$3|1$} (e);
\end{tikzpicture}
}
\end{center}
\end{minipage}

\caption{The generating automata for two examples of groups $G_P$ and $G_Q$}\label{fig_automata}
\end{figure}
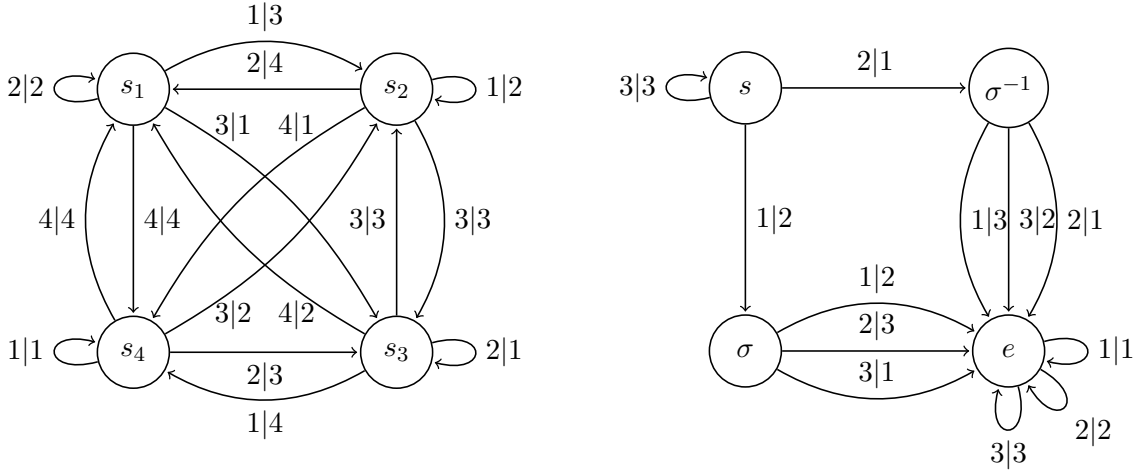

The construction can be modified for the case $d>|P|^2$. We add $d-|P|^2$ empty words
$e$ to the wreath recursion:
\[
s_{(\pi,\tau)}=\tau\gamma(s_1,s_2,\ldots,s_{|P|^2},e,\ldots,e)\pi,
\]
where $e$ acts trivially on $X^{*}$. Then the action graphs satisfy
$\Gamma_{n+1}=\Gamma_n^{\circ}\zz\Gamma$, where $\Gamma_n^{\circ}$ is obtained from
$\Gamma_n$ by adding $d-|P|^2$ loops to every vertex.

\vspace{0.2cm}\textbf{Modeling iterated replacement product.}
Let $Q=\{\pi_1,\ldots,\pi_d\}$ be a symmetric set of permutations on $X=\{1,2,\ldots,d+1\}$.
Let $\gamma$ be the involution on $X$ given by the rule: $\pi_{\gamma(x)}=\pi^{-1}_{x}$ and $\gamma(d+1)=d+1$.
We define wreath recursion for the set $S_Q=\{ s_1,\ldots, s_{d+1}\}$ by
\begin{eqnarray*}
s_i&=&\pi_i(e,e,\ldots,e), \quad \mbox{ $i=1,2,\ldots,d$;}\\
s_{d+1}&=&\gamma(s_1,s_2,\ldots,s_{d+1}).
\end{eqnarray*}
Let $G_{Q}$ be the self-similar group defined by this recursion.
Notice that the generating set $S_Q$ is symmetric, because $s_i^{-1}=s_{\gamma(i)}$ for $i=1,2\ldots,d$ and $s_{d+1}^2=e$.

Every $s_i$ for $i=1,\ldots,d$ changes only the first letter in any word over $X$. In this case one usually identifies $s_i$ and $\pi_i$; then we can write $G_Q=\langle \pi_1,\ldots,\pi_d,s\rangle$, where $s=s_{d+1}$ is given by the recursion $s=\gamma(\pi_1,\ldots,\pi_d,s)$.

\begin{theorem}
The action graphs $\Gamma_n$ of the group $G_Q=\langle S_Q\rangle$ satisfy
$\Gamma_{n+1}=\Gamma_n\rp \Gamma$, $n\geq 1$, where $\Gamma$ is the graph of the action
of $Q$ on $X$. In particular, if $\Gamma$ is connected, then all $\Gamma_n$ are connected.
\end{theorem}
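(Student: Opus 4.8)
The plan is to exhibit an explicit identification of the edge set of $\Gamma_{n+1}$ with that of $\Gamma_n\rp\Gamma$. First I would record the parameters: $\Gamma$ is $d$-regular on $d+1=|X|$ vertices and each $\Gamma_n$ is $(d+1)$-regular on $(d+1)^n$ vertices, so $\Gamma_n\rp\Gamma$ is $(d+1)$-regular on $(d+1)^{n+1}$ vertices, matching $\Gamma_{n+1}$. To form the replacement product one must label the edges around each vertex of $\Gamma_n$ by the vertices of $\Gamma$, i.e.\ by the elements of $X$; I would set $v[x]:=s_x(v)$ for $x\in X$, $v\in X^n$. Using $s_x^{-1}=s_{\gamma(x)}$, the relation $v[x]=u$ gives $u[\gamma(x)]=s_{\gamma(x)}(u)=s_x^{-1}(u)=v$, so each edge of $\Gamma_n$ carries the label $x$ near $v$ and $\gamma(x)$ near $u=v[x]$, and this is a genuine one-to-one labeling of the $(d+1)$ edge-slots at each vertex by the $(d+1)$ vertices of $\Gamma$.

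Next I would identify $X^{n+1}$ with $X\times X^n=V(\Gamma)\times V(\Gamma_n)$ via $xv\leftrightarrow(x,v)$ and read off the edges of $\Gamma_{n+1}$ one generator at a time. For $i\le d$, the recursion $s_i=\pi_i(e,\ldots,e)$ yields $s_i(xv)=\pi_i(x)\,v$, producing the edge $xv\mbox{ --- }\pi_i(x)v$. As $i$ ranges over $1,\ldots,d$, the pairs $x\mbox{ --- }\pi_i(x)$ are precisely the edges of $\Gamma$, so these are exactly the edges of type (1) of $\Gamma_n\rp\Gamma$ (those fixing the second coordinate and moving inside a single copy of $\Gamma$). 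For $i=d+1$, the recursion $s_{d+1}=\gamma(s_1,\ldots,s_{d+1})$ yields $s_{d+1}(xv)=\gamma(x)\,s_x(v)=\gamma(x)\,v[x]$, producing the edge $xv\mbox{ --- }\gamma(x)\,v[x]$; since $v[x]$ is joined to $v$ by the labeled edge carrying $x$ near $v$ and $\gamma(x)$ near $v[x]$, this is exactly the connecting edge of type (2) for that edge of $\Gamma_n$. Collecting these, the $d$ generators $s_1,\ldots,s_d$ account for all edges of type (1) and $s_{d+1}$ accounts for the single edge of type (2) at each vertex, which is the whole edge set of $\Gamma_n\rp\Gamma$; hence $\Gamma_{n+1}=\Gamma_n\rp\Gamma$ for all $n\ge 1$.

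For the connectedness claim I would argue by induction. The base graph $\Gamma_1$ contains, among the edges produced by $s_1,\ldots,s_d$, exactly the edges $x\mbox{ --- }\pi_i(x)$, i.e.\ a spanning copy of $\Gamma$; so $\Gamma$ connected forces $\Gamma_1$ connected. Assuming $\Gamma_n$ connected, the identity $\Gamma_{n+1}=\Gamma_n\rp\Gamma$ exhibits $\Gamma_{n+1}$ as the replacement product of two connected graphs, which is connected by the basic property recorded in Section~2, completing the induction.

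The one genuinely delicate point, and the step I expect to be the main obstacle, is matching the single generator $s_{d+1}$ with the inter-copy edges: I must verify that the involution $\gamma$ in its recursion simultaneously supplies the first-letter move $x\mapsto\gamma(x)$ and the correct reverse-edge label, so that $s_{d+1}(xv)=\gamma(x)\,v[x]$ genuinely matches the edge of type (2) under the labeling $v[x]=s_x(v)$. This is exactly where the symmetry $s_x^{-1}=s_{\gamma(x)}$ enters; once it is confirmed, everything else is routine bookkeeping.
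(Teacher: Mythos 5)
Your proposal is correct and takes essentially the same route as the paper's proof: the same edge-labeling $v[x]:=s_x(v)$, the same identification $xv\leftrightarrow(x,v)$ of $X^{n+1}$ with $V(\Gamma)\times V(\Gamma_n)$, and the same matching of $s_1,\ldots,s_d$ with the type-(1) edges and $s_{d+1}$ with the type-(2) edges. Your explicit check of label consistency via $s_x^{-1}=s_{\gamma(x)}$ and your induction for connectedness (observing that $\Gamma_1$ contains a spanning copy of $\Gamma$) simply fill in details that the paper leaves implicit.
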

\begin{proof}
The graph $\Gamma$ is a $d$-regular graph on $d+1$ vertices, while $\Gamma_n$ are $(d+1)$-regular graphs. In order to define the replacement product $\Gamma_n\rp\Gamma$, we label the edges of $\Gamma_n$ by the vertices of $\Gamma$ as follows.
For $x\in X$, define the
$x$-neighbor of a vertex $v\in X^n$ as $v[x]:=s_x(v)$. In this way we get the labeling
of edges $v
\begin{tikzpicture} \draw (0.1,0.1) -- (1.1,0.1); \node [above] at (0.6,0)
{\scriptsize{$x$ \ $\gamma(x)$}};\end{tikzpicture} s_x(v)$. Now we can consider the
replacement product $\Gamma_n\rp\Gamma$. The vertex set of $\Gamma_n\rp\Gamma$ can be
naturally identified with the vertex set $X^{n+1}$ of $\Gamma_{n+1}$ via $(x,v)\leftrightarrow xv$.
For every edge $x\overset{\pi_i}{\mbox{ --- }} y=\pi_i(x)$ in $\Gamma$, the edges $xv\mbox{ --- } yv$ in $\Gamma_n\rp\Gamma$ coincide with edges in $\Gamma_{n+1}$ given by $s_i$. For every edge $v
\begin{tikzpicture} \draw (0.1,0.1) -- (1.1,0.1); \node [above] at (0.6,0)
{\scriptsize{$x$ \ $\gamma(x)$}};\end{tikzpicture} s_x(v)$ in $\Gamma_n$, the edge $xv\mbox{ --- } \gamma(x)s_x(v)$ in $\Gamma_n\rp\Gamma$ coincide with the edge in $\Gamma_{n+1}$ given by $s_{d+1}$.
\end{proof}

The wreath recursion for the group $G_Q$ defines a finite automaton over $X$ with $d+2$
states. All these automata belong to the important class of bounded automata. In
particular, the groups $G_Q$ belong to the class of contracting self-similar groups
(see \cite{self_sim_groups}). The action graphs $\Gamma_n$ of groups generated by
bounded automata were studied in \cite{PhD_Bond}. In particular, the diameters of
graphs $\Gamma_n$ have exponential growth in terms of $n$, and there is an algorithmic
method to find the exponent of growth as the Perron–Frobenius eigenvalue of certain
non-negative integer matrix.

Some of the groups $G_Q$ were studied before as interesting examples of automaton groups. To see this,
assume that all permutations in $Q$ are involutions (then $\gamma$ is trivial), $d\geq 2$, and the graph $\Gamma$ is connected.
Then the group $G_Q$ is a $\GGS$ group studied in \cite{PhD_Barth,branch_groups}. In particular, in this case $G_Q$ is not finitely presented and has intermediate growth. Is it true that all groups $G_Q$ have subexponential growth?

\begin{example}
Let $d=2$ and $Q=\{ \sigma, \sigma^{-1}\}$, $\sigma=(1\,2\,3)$. The group $G_Q$ is generated by $\sigma$ and $s=(1\,2)(\sigma,\sigma^{-1},s)$. The generating automaton is shown on the right-hand side of Figure~\ref{fig_automata}.
\end{example}

\section{Wreath recursions leading to expanding graphs}

In this section we construct wreath recursions that model the iterations
(\ref{eqn_expanders_zig_zag}) and (\ref{eqn_expanders_rp}).

Fix $k\geq 1$. Let $X=\{1,2,\ldots,d\}$ and $P$ be a symmetric set of permutations on $X$ such that $d=|P|^{2k}$. We introduce formal symbols $s_{(\pi,\tau)}$ for $\pi,\tau\in P$ and define wreath recursion for the set $S_P=\{s_{(\pi,\tau)} : \pi,\tau\in P\}$ as follows. Take all
words of length $k$ over $S_P$, there are $|S_P|^k=d$ such words, and fix an order on
them: $w_1,w_2,\ldots,w_{d}$. Let $\gamma$ be the involution on $X$ such that if
$w_x=s_{(\pi_1,\tau_1)}s_{(\pi_2,\tau_2)}\ldots s_{(\pi_k,\tau_k)}$ then
$w_{\gamma(x)}=s_{(\tau^{-1}_k,\pi^{-1}_k)}\ldots
s_{(\tau^{-1}_2,\pi^{-1}_2)}s_{(\tau^{-1}_1,\pi^{-1}_1)}$. Define wreath recursion by
\[
s_{(\pi,\tau)}=\tau\gamma (w_1,w_2,\ldots,w_d)\pi=\tau\gamma\pi
(w_{\pi(1)},w_{\pi(2)},\ldots,w_{\pi(d)}), \ \pi,\tau\in P.
\]
Let $G_{P,k}$ be the self-similar group generated by this recursion. Notice that
$S_P$ defines a symmetric generating set of $G_{P,k}$, where $s_{(\pi,\tau)}^{-1}=s_{(\tau^{-1},\pi^{-1})}$. When $k=1$ we get the groups $G_P=G_{P,1}$ from the previous section.

Let us consider the associated
action graphs $\Gamma_n$. Note that each word $w_x$ represents a path of length $k$ in
$\Gamma_n$, which is an edge in the graph $\Gamma_n^k$. For $x\in X$, define the
$x$-neighbor of a vertex $v\in X^n$ in $\Gamma_n^k$ as $v[x]:=w_x(v)$. Then each edge
$s_{(\pi,\tau)}(xv)=\tau(\gamma(\pi(x)))w_{\pi(x)}(v)$ in $\Gamma_{n+1}$ is precisely
the zig-zag path in $\Gamma_n^k\zz \Gamma$. We get the following statement.

\begin{theorem}
The action graphs $\Gamma_n$ of the group $G_{P,k}=\langle S_P\rangle$ satisfy
$\Gamma_{n+1}=\Gamma_n^k\zz \Gamma$, $n\geq 1$, where $\Gamma$ is the graph of the
action of $P$ on $X$.
\end{theorem}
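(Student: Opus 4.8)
The plan is to establish the identity $\Gamma_{n+1}=\Gamma_n^k\zz\Gamma$ by exhibiting an explicit bijection between the vertex sets and then checking that the edges given by the generating set $S_P$ on $\Gamma_{n+1}$ are precisely the zig-zag paths in $\Gamma_n^k\zz\Gamma$. This follows the exact template of the $k=1$ case proved in the first theorem; the only genuine new ingredient is that the middle ``walk'' step now lives in the power graph $\Gamma_n^k$ rather than in $\Gamma_n$ itself. Since $\Gamma_n^k$ has the same vertex set $X^n$ as $\Gamma_n$, with its edges corresponding to length-$k$ walks, and since there are exactly $|S_P|^k=d$ words $w_x$ of length $k$, the data needed to label the edges of $\Gamma_n^k$ by the $d$ vertices of $\Gamma$ is already available.

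First I would record the degree and vertex-count bookkeeping: $\Gamma$ is the $|P|$-regular graph of $P$ acting on $X$ (so $|V(\Gamma)|=d$), while each $\Gamma_n$ is $d$-regular on $X^n$ since $|S_P|=|P|^2$, and hence $\Gamma_n^k$ is $d^k$-regular on $X^n$; thus $\Gamma_n^k\zz\Gamma$ is $|P|^2$-regular on $d\cdot|X^n|=|X^{n+1}|$ vertices, matching $\Gamma_{n+1}$. Next I would set up the labeling of $\Gamma_n^k$ required to form the zig-zag product: for $x\in X$ I define the $x$-neighbor of $v\in X^n$ by $v[x]:=w_x(v)$, using the fixed ordering $w_1,\dots,w_d$ of length-$k$ words over $S_P$. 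The involution $\gamma$ guarantees this labeling is symmetric in the sense demanded by the definition of $\zz$: because $w_{\gamma(x)}$ is the formal inverse word of $w_x$, we have $w_{\gamma(x)}=w_x^{-1}$ as group elements, so $v[x]=w_x(v)=u$ forces $u[\gamma(x)]=w_x^{-1}(u)=v$, giving the required edge $v\,\text{---}\,u$ labeled $x$ near $v$ and $\gamma(x)$ near $u$.

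Then I would identify the vertex sets via $(x,v)\leftrightarrow xv$ and trace a generic zig-zag path. A zig-step $x\overset{\pi}{\text{---}}\pi(x)$ in $\Gamma$, followed by the edge $v\,\text{---}\,v[\pi(x)]=w_{\pi(x)}(v)$ in $\Gamma_n^k$ with labels $\pi(x)$ near $v$ and $\gamma(\pi(x))$ near $w_{\pi(x)}(v)$, followed by a zag-step $\gamma(\pi(x))\overset{\tau}{\text{---}}\tau(\gamma(\pi(x)))$ in $\Gamma$, produces an edge from $xv$ to $\tau(\gamma(\pi(x)))\,w_{\pi(x)}(v)$ in $\Gamma_n^k\zz\Gamma$. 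On the other side, the recursion $s_{(\pi,\tau)}=\tau\gamma\pi(w_{\pi(1)},\dots,w_{\pi(d)})$ gives $s_{(\pi,\tau)}(xv)=\tau\gamma\pi(x)\,w_{\pi(x)}(v)$, which is exactly the same endpoint. As $(\pi,\tau)$ ranges over $P\times P$ this matches the $|P|^2$ edges incident to $xv$ in $\Gamma_{n+1}$ with the $|P|^2$ zig-zag paths, completing the identification.

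The main obstacle to make rigorous is verifying that the labeling $v[x]=w_x(v)$ is genuinely an involutive edge-labeling of $\Gamma_n^k$ compatible with the definition of the product, i.e.\ confirming $w_{\gamma(x)}=w_x^{-1}$ at the level of group elements from the combinatorial definition of $\gamma$ on length-$k$ words and the relation $s_{(\pi,\tau)}^{-1}=s_{(\tau^{-1},\pi^{-1})}$; once this is in hand the edge-by-edge matching is a direct computation exactly paralleling the $k=1$ theorem. I would also note in passing the base case $\Gamma_1$ and remark that, as in the earlier construction, $\gamma=\gamma^{-1}$ so the roles of the two labels are consistent.
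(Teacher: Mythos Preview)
Your approach is exactly the paper's: define the edge-labeling of $\Gamma_n^k$ by $v[x]:=w_x(v)$, identify vertices via $(x,v)\leftrightarrow xv$, and check that $s_{(\pi,\tau)}(xv)=\tau\gamma\pi(x)\,w_{\pi(x)}(v)$ is precisely the generic zig-zag path; the paper compresses all of this into two sentences before the theorem statement, while you spell out the symmetry check $w_{\gamma(x)}=w_x^{-1}$ explicitly, which is a welcome addition.

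One small slip in your bookkeeping paragraph: since $|S_P|=|P|^2$ (not $d$), each $\Gamma_n$ is $|P|^2$-regular, and therefore $\Gamma_n^k$ is $(|P|^2)^k=|P|^{2k}=d$-regular (not $d^k$-regular); this is exactly what is needed so that the edges of $\Gamma_n^k$ can be labeled by the $d$ vertices of $\Gamma$. Your final conclusion that $\Gamma_n^k\zz\Gamma$ is $|P|^2$-regular on $|X^{n+1}|$ vertices is correct, so the error is confined to the intermediate arithmetic and does not affect the argument.
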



If $\lambda(\Gamma)$ and $\lambda(\Gamma_1)$ are small enough (for example, less than
$1/5$), then we get a sequence of expanders. Therefore this construction gives simple
explicit examples of self-similar groups whose graphs $\Gamma_n$ form an expanding
family.

As above the construction can be modified for the case $d>|P|^k$ by adding empty words
to the wreath recursion.

\vspace{0.2cm} Similarly we model the iteration (\ref{eqn_expanders_rp}). Fix $k\geq 1$. Let $Q=\{\pi_1,\ldots,\pi_d\}$ be a symmetric set of permutations on $X=\{1,2,\ldots,(d+1)^k\}$. Let $s$ be a formal symbol and $S_Q=\{\pi_1,\ldots,\pi_d,s\}$, where $\pi_i$ is considered as transformation of $X^{*}$ that changes the first letter of words. Take all
words of length $k$ over $S_Q$, there are $(d+1)^k$ such words, and fix an order on
them: $w_1,w_2,\ldots,w_{(d+1)^k}$. Let $\gamma$ be the involution on $X$ such that if
$w_x=s_{1}s_{2}\ldots s_{k}$ then
$w_{\gamma(x)}=s_{k}^{-1}\ldots s_{2}^{-1}s_{1}^{-1}$, where for the symbol $s_i=s$ we put $s^{-1}:=s$.
We define the self-similar group $G_{Q,k}=\langle \pi_1,\ldots,\pi_d,s\rangle$, where $s$ is given by the wreath recursion \[
s=\gamma(w_1,w_2,\ldots,w_{(d+1)^k}).
\]
The set $S_Q$ defines a symmetric generating set of $G_{Q,k}$, because $Q$ is symmetric and $s^2=e$.

As above we get the following statement.

\begin{theorem}
The action graphs $\Gamma_n$ of the group $G_{Q,k}=\langle S_Q\rangle$ satisfy
$\Gamma_{n+1}=\Gamma_n^k\rp \Gamma$, $n\geq 1$, where $\Gamma$ is the graph of the
action of $Q$ on $X$.
\end{theorem}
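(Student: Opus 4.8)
The plan is to run the same argument as the $k=1$ case (the theorem for $G_Q$), with $\Gamma_n$ replaced by its $k$-th power $\Gamma_n^k$ and the tail-generators $s_i$ replaced by the length-$k$ words $w_x$. First I would settle the regularity and size bookkeeping. Since $|S_Q|=d+1$, the graph $\Gamma_n$ is $(d+1)$-regular, so $\Gamma_n^k$ is $(d+1)^k$-regular on the vertex set $X^n$; and $\Gamma$, the action graph of $Q$ on $X$, is $d$-regular on $|X|=(d+1)^k$ vertices. These are exactly the data needed to form $\Gamma_n^k\rp\Gamma$, and I identify its vertex set $X\times X^n$ with $X^{n+1}=V(\Gamma_{n+1})$ via $(x,v)\leftrightarrow xv$.

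The central step is the edge-labeling of $\Gamma_n^k$ by the vertices of $\Gamma$. For $x\in X$ I set $v[x]:=w_x(v)$. As the $w_x$ range over all $(d+1)^k$ words of length $k$ in $S_Q$ and $\Gamma_n$ is $(d+1)$-regular, the length-$k$ paths they trace from $v$ are exactly the $(d+1)^k$ edges of $\Gamma_n^k$ at $v$ (powers being counted with multiplicity, so distinct words give distinct edges even for coinciding endpoints). Thus $x\mapsto v[x]$ is the required bijection. I then check that the labeling is symmetric, i.e.\ that the edge carrying label $x$ at $v$ carries label $\gamma(x)$ at $v[x]$. This is where the definition of $\gamma$ is used: writing $w_x=s_1\cdots s_k$, the word $w_{\gamma(x)}=s_k^{-1}\cdots s_1^{-1}$ is the group inverse of $w_x$, so $w_{\gamma(x)}(v[x])=w_x^{-1}(w_x(v))=v$, and applying the factors of $w_{\gamma(x)}$ one at a time retraces, edge by edge, the $w_x$-path backwards. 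Hence $v[x]\bigl[\gamma(x)\bigr]=v$ and the labeling is symmetric, so $\Gamma_n^k\rp\Gamma$ is well-defined.

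With the labeling fixed, the two edge-types of the replacement product match the two kinds of generators of $G_{Q,k}$. A Type-1 edge arises from an edge $x\overset{\pi_i}{\mbox{ --- }}\pi_i(x)$ of $\Gamma$ inside the copy over a fixed $v$, giving $xv\mbox{ --- }\pi_i(x)v$; this is precisely the $\Gamma_{n+1}$-edge produced by $\pi_i$, which changes only the first letter. A Type-2 edge arises from an edge of $\Gamma_n^k$ labeled $x$ at $v$ and $\gamma(x)$ at $v[x]=w_x(v)$, giving $xv\mbox{ --- }\gamma(x)w_x(v)$; this is precisely the $\Gamma_{n+1}$-edge produced by $s=\gamma(w_1,\ldots,w_{(d+1)^k})$, since $s(xv)=\gamma(x)w_x(v)$. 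Because the $\pi_i$ and the single recursion for $s$ exhaust $S_Q$, these two families account for all edges of $\Gamma_{n+1}$, giving $\Gamma_{n+1}=\Gamma_n^k\rp\Gamma$ for every $n\geq1$.

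The one genuinely delicate point is the symmetry of the labeling in the middle paragraph: one must verify not merely that $w_{\gamma(x)}$ sends $v[x]$ back to $v$, but that it does so along the same $k$ edges traversed by $w_x$, so that label $x$ at $v$ and label $\gamma(x)$ at $v[x]$ name the same edge of $\Gamma_n^k$. Both this and the fact that $\gamma$ is a genuine involution follow from $w_{\gamma(x)}=w_x^{-1}$ together with the conventions that $Q$ is symmetric and $s^{-1}:=s$, which ensure $w_{\gamma(x)}$ is again a length-$k$ word over $S_Q$ and that reverse-and-invert squares to the identity. Everything else reduces to the bookkeeping already carried out in the $k=1$ theorem.
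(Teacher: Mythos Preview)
Your proposal is correct and follows exactly the approach the paper intends: the paper proves the $k=1$ case (Theorem~2) and the $G_{P,k}$ case (Theorem~3) explicitly, then merely writes ``As above we get the following statement'' before the theorem you are proving, so your argument \emph{is} the omitted proof. You are in fact more careful than the paper, since you verify explicitly that the labeling $v[x]:=w_x(v)$ is symmetric (via $w_{\gamma(x)}=w_x^{-1}$ retracing the same $k$-path) and that $\gamma$ is an involution, points the paper leaves to the reader.
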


This construction produces other examples of self-similar groups whose graphs $\Gamma_n$ form an expanding family when  $k\geq 4$ and $\lambda(\Gamma)$ and $\lambda(\Gamma_1)$ are small enough.

\vspace{0.2cm}\textbf{Questions.} It is interesting what are algebraic and geometric properties of the groups $G_{P,k}$
and $G_{Q,k}$. Are these groups finitely presented? have property (T)? What are their
profinite completions?  What are the properties of their action on the boundary of the space $X^{*}$, i.e., infinite sequences $x_1x_2\ldots$ over $X$?

\bibliographystyle{plain}

\end{document}